\documentclass[a4paper,11pt]{scrartcl}
\addtokomafont{disposition}{\rmfamily}

\usepackage[left=2.5cm,right=2.5cm,top=2.5cm,bottom=2.5cm]{geometry}

\usepackage{amsthm}
\usepackage{amsmath}
\usepackage{amssymb}
\usepackage{mathrsfs} 
\usepackage{graphicx}

\usepackage[dvipsnames]{xcolor}
\usepackage{stackengine}
\usepackage{placeins}
\usepackage{arydshln}
\usepackage{bbm}
\usepackage{textcomp}
\usepackage{enumitem}
\usepackage{mathtools}
\usepackage{tabularx}
\usepackage{booktabs}
\usepackage{rotating}

\usepackage{color}
\usepackage{framed}
\usepackage{comment}
\definecolor{shadecolor}{gray}{0.9}

\usepackage{dsfont} 
\usepackage{caption} 
\usepackage{subcaption} 
\usepackage{graphicx}
\usepackage{pdfpages}
\usepackage{url}
\usepackage[english]{babel}
\usepackage{setspace}
\usepackage{dashrule}
\usepackage{tikz}
\usetikzlibrary{automata, positioning, arrows}
\tikzset{
        ->,  
        node distance=5.5cm, 
        every state/.style={thick, fill=gray!10}, 
        initial text=$ $, 
        }

\usepackage{xr-hyper} 
\usepackage[
breaklinks,
colorlinks=true,
pagebackref=false,
pdfauthor={},%
pdftitle={},%
citecolor=blue,
linkcolor=blue,
urlcolor=blue,
filecolor=blue
]{hyperref}      
\usepackage{cleveref}
\usepackage[authoryear,round]{natbib}

\specialcomment{extra}{\begin{shaded}}{\end{shaded}}

\theoremstyle{plain}  
\newtheorem{theorem}{Theorem}
 
\newtheorem{proposition}{Proposition} 
 
\newtheorem{assumption}{Assumption}

\theoremstyle{definition} 
\newtheorem{definition}{Definition}
 
\newtheorem{example}{Example}
\newtheorem{rem}[theorem]{Remark}

%

\theoremstyle{remark}

\newcommand{\diff}{\mathrm{d}}
\newcommand{\dint}{\,\mathrm{d}}
\newcommand{\E}{\mathbb{E}}

\newcommand{\eps}{\varepsilon}


\newcommand{\F}{\mathcal{F}}
\newcommand{\R}{\mathbb{R}}

\newcommand{\one}{\mathds{1}}
\newcommand{\interior}{\operatorname{int}}

\newcommand{\rank}{\operatorname{rank}}

\newcommand{\ph}{\varphi}

\DeclareMathOperator*{\argmin}{arg\,min}

\newcommand{\N}{\mathbb N}
\renewcommand{\P}{\mathbb P}

\newcommand{\X}{\mathcal{X}}
\newcommand{\Y}{\mathcal{Y}}
\newcommand{\Z}{\mathcal{Z}}

\newcommand{\wt}{\widetilde}

\renewcommand{\rm}{\normalfont \rmfamily}
\renewcommand{\bf}{\normalfont \bfseries}

\newcommand{\as}{\text{a.s.}}

\def\be{\begin{equation} \label}
\def\ee{\end{equation}}


\usepackage[colorinlistoftodos,textsize=tiny]{todonotes}
\newcommand{\Comments}{1}
\newcommand{\mynote}[2]{\ifnum\Comments=1\textcolor{#1}{#2}\fi}
\newcommand{\mytodo}[2]{\ifnum\Comments=1%
  \todo[linecolor=#1!80!black,backgroundcolor=#1,bordercolor=#1!80!black]{#2}\fi}

\ifnum\Comments=1               
  \setlength{\marginparwidth}{0.8in}
\fi

\ifnum\Comments=1               
  \setlength{\marginparwidth}{0.8in}
\fi

\ifnum\Comments=1               
  \setlength{\marginparwidth}{0.8in}
\fi

\allowdisplaybreaks

\begin{document}

\title{Characterizing M-estimators}
\author{Timo Dimitriadis\thanks{Heidelberg University, Alfred Weber Institute of Economics, Bergheimer Str.\ 58, 69115 Heidelberg, Germany and
		Heidelberg Institute for Theoretical Studies, 69118 Heidelberg, Germany, e-mail: \href{mailto:timo.dimitriadis@awi.uni-heidelberg.de}{timo.dimitriadis@awi.uni-heidelberg.de}}
	\and Tobias Fissler\thanks{Vienna University of Economics and Business (WU), Department of Finance, Accounting and Statistics, Welthandelsplatz 1, 1020 Vienna, Austria, 
		e-mail: \href{mailto:tobias.fissler@wu.ac.at}{tobias.fissler@wu.ac.at}} \and Johanna Ziegel\thanks{University of Bern, Department of Mathematics and Statistics, Institute of Mathematical Statistics and Actuarial Science, Alpeneggstrasse 22, 3012 Bern, Switzerland, 
		e-mail: \href{mailto:johanna.ziegel@stat.unibe.ch}{johanna.ziegel@stat.unibe.ch}}
}
\maketitle

\begin{abstract}
	We characterize the full classes of M-estimators for semiparametric models of general functionals by formally connecting the theory of consistent loss functions from forecast evaluation with the theory of M-estimation.
	This novel characterization result opens up the possibility for theoretical research on efficient and equivariant M-estimation and, more generally, it allows to leverage existing results on loss functions known from the literature of forecast evaluation in estimation theory.
\end{abstract}

\textit{Keywords:}
M-estimation, loss function, strict consistency, characterization

\section{Introduction}
\label{sec:intro}

\onehalfspacing

The task of regression is to model the effect of covariates $X$ on a response variable $Y$, or more precisely, the effect of $X$ on a functional $\Gamma$ of the conditional distribution of $Y$ given $X$, $F_{Y|X}$.
The typical example for $\Gamma$ is the mean, resulting in mean regression. 
In many application, one is interested in other functionals such as quantiles (Value at Risk, VaR), expectiles, variance, or Expected Shortfall (ES) \citep{Koenker1978, Bollerslev1986, Patton2019}.
A correctly specified parametric model $m(X,\theta)$ satisfies
$\Gamma(F_{Y|X}) = m(X,\theta_0)$ for some unique parameter $\theta_0\in\Theta$.

The statistician's task is to estimate the parameter $\theta_0$ based on data $(Y_t,X_t)$, $t=1, \ldots, N$. 
For the standard situation of linear mean regression, $\Gamma(F_{Y|X}) = \E[Y|X]$, and $m(X, \theta) = X^\intercal \theta$,
one often employs the ordinary-least-squares (OLS) estimator of the form
$\widehat \theta_{T} = \argmin_{\theta \in\Theta} \frac{1}{T} \sum_{t=1}^T \big(Y_t - X_t^\intercal \theta\big)^2$, or a related closed-form solution thereof.  
The OLS estimator is a special instance of an M-estimator \citep{Huber1967, NeweyMcFadden1994},
\be{eq:M-est}
\widehat \theta_{T} = \argmin_{\theta \in\Theta} \frac{1}{T} \sum_{t=1}^T \rho \big(Y_t , m(X_t,\theta)\big),
\ee
based on a loss function $\rho$, which is the key ingredient of an M-estimator.
Note that our results carry over to time-varying loss functions $\rho_t$.

The core condition on $\rho$ for consistency of $\widehat \theta_{T}$ is that 
$\E\big[\rho \big(Y_t , m(X_t,\theta_0)\big)\big]<\E\big[\rho \big(Y_t , m(X_t,\theta)\big)\big]$ for all $\theta\neq\theta_0$ and for all $t\in\N$, which we call \emph{strict model-consistency} of $\rho$ for $m$. 
Apart from special cases, the classes of such loss functions $\rho$ are unfortunately not well understood in M-estimation yet.
Our main result, Theorem \ref{thm:hierarchy consistency}, establishes that a loss function $\rho$ is (strictly) model-consistent for $m$ if and only if it is (strictly) \emph{consistent} for the target functional $\Gamma$, meaning that 
$
\int \rho \big(y,\Gamma(F)\big)\mathrm{d} F(y) \le \int \rho(y,\xi)\mathrm{d} F(y)
$
for all $\xi \in \R^k$ and for all distributions $F$ in a sufficiently rich class, where strictness means that equality implies $\xi=\Gamma(F)$.
Since there are well-understood characterization results for strictly consistent losses from the literature of forecast evaluation \citep{Gneiting2011, FisslerZiegel2016}, Theorem \ref{thm:hierarchy consistency} lifts these results to a novel characterization of the classes of consistent M-estimators for general (vector-valued) functionals.

Our result provides the first characterization of the full classes of consistent M-estimators for semiparametric models of general, possibly vector-valued functionals by formally connecting the two strands of literature on M-estimation and forecast evaluation. 
Understanding the full classes of M-estimators facilitates a deeper understanding of the (im)possibilities, e.g., in the following areas.
It allows to derive M-estimation efficiency bounds, similar to efficient generalized method of moments estimation of \citet{Chamberlain1987}.
Drawing on the literature of homogeneous and equivariant loss functions \citep{NoldeZiegel2017, FisslerZiegel2019}, our connection allows to classify such M-estimators having e.g.~the advantage that they are invariant to linear rescaling in finite samples.
Furthermore, estimators with the most beneficial integrability conditions in the sense that $\E[ \big| \rho \big(Y_t , m(X_t,\theta)\big) \big|]$ is finite can be derived.
Our result further allows to leverage recent discoveries in the forecast evaluation literature such as mixture representations \citep{EhmETAL2016} or score decompositions \citep{DGJ_2021} for the purpose of M-estimation.
Theorem \ref{thm:hierarchy consistency} also provides the first formal argument why consistent M-estimation is basically impossible if there do not exist (strictly) consistent loss functions for the target functional $\Gamma$. This argument has already been used informally for the Expected Shortfall \citep{Patton2019, DimiBayer2019}, the Range Value at Risk \citep{Barendse2020}, and the mode \citep{Kemp2012}.


\section{Notation and Definitions}
\label{subsec:Notation and Setting}

Let $(\Omega, \mathcal A, \P)$ be a non-atomic, complete probability space where all random variables are defined. 
We introduce a class $\Y$ of $\R^d$-valued possible response variables, a corresponding class $\X$ or $\R^p$-valued regressors, and $\Z$ the class of possible response--regressor pairs $(Y,X)$. 
The class of marginal distributions $F_Y$ of $Y\in\Y$ is denoted by $\F_\Y$, with a corresponding notation $\F_\X$.
$\F_{\Y|\X}$ is the class of regular versions of conditional distributions $F_{Y|X}$ for any $(Y,X)\in \Z$; see Appendix \ref{sec:App1} for technical details.
We will identify cumulative distribution functions with their corresponding measures where convenient.
Let $\Gamma \colon\F_{\Y|\X}\to\Xi\subseteq\R^k$ be some $k$-dimensional, single-valued functional of the conditional distribution of $Y$ given $X$. 
Let $\Theta\subseteq \R^q$ be some parameter space with non-empty interior, $\interior(\Theta)$, and $m\colon \R^p\times \Theta\to \Xi$ a parametric model for the functional $\Gamma$. 
	We shall work under the following assumption of a correctly specified model with a uniquely identified true model parameter.
	\begin{assumption}
		\label{ass:unique model}
		For all $Z=(Y, X)\in\Z$ there is a unique parameter $\theta_0 = \theta_0(F_Z) \in \interior(\Theta)$ such that almost surely
		\be{eq:unique model}
		m(X,\theta_0) = \Gamma(F_{Y|X}). 
		\ee
	\end{assumption}
	Assumption \ref{ass:unique model} is semiparametric in the sense that the finite-dimensional parameter $\theta_0$ in \eqref{eq:unique model} does not fully describe the distribution of $(Y,X)$, but only $\Gamma(F_{Y|X})$, the component of the conditional distribution we are essentially interested in.
	In general, an estimator should be valid on a class of random variables $\Z$ which is as large as possible, allowing to apply the estimator in many different situations and under uncertainty of the distributions of the underlying data.
	Hence, Assumption \ref{ass:unique model} is a minimal condition on the random variables that allows for semiparametric modeling of a functional of the conditional distribution.

	We continue by recalling the use of loss functions in the closely related area of forecast evaluation, where the notion of a \emph{strictly consistent} loss function is a crucial concept in the literature on forecast evaluation, since it incentivizes truthful reports  \citep{MurphyDaan1985}.
	Making use of a similar decision-theoretic terminology as in  \cite{Gneiting2011} and \cite{FisslerZiegel2016},
	let $\F$ be some generic class of probability distributions on $\R^d$, which is our {observation domain}, and $\Xi\subseteq \R^k$, our {action domain}.

	\begin{definition}[Consistency and elicitability]
		\label{defn:consistency}
		A loss function $\rho\colon \R^d\times \Xi\to\R$ is called \emph{$\F$-consistent} for a functional $\Gamma\colon \F\to\Xi$ if $\rho(\cdot, \xi)$  is $F$-integrable for all $F\in\F$ and for all $\xi\in\Xi$, and if
		\be{eq:strict consistency}
		\int \rho\big(y,\Gamma(F)\big) \dint F(y) \le \int \rho (y, \xi)\dint F(y) \qquad \text{for all } F\in\F, \ \text{for all } \xi\in\Xi\,.
		\ee
		If equality in \eqref{eq:strict consistency} implies $\xi = \Gamma(F)$, then the loss function is called \emph{strictly $\F$-consistent} for $\Gamma$.
		A functional $\Gamma\colon \F\to\Xi$ is \emph{elicitable} if there is a strictly $\F$-consistent loss function for it.
	\end{definition}
	
	On the class of distributions with a finite second moment, 
	the squared loss $\rho(y,\xi) = (y-\xi)^2$ is strictly consistent for the mean functional. 
	More generally, subject to regularity and integrability conditions on $\rho$ and richness conditions on $\F$, $\rho$ is (strictly) $\F$-consistent for the mean if and only if it is a so-called \emph{Bregman loss} 
	\begin{align}
		\label{eqn:BregmanLoss}
		\rho(y,\xi) = \phi(y) - \phi(\xi) +\phi'(\xi)(\xi-y) + \kappa(y),
	\end{align}
	where $\phi$ is a (strictly) convex function on $\R$ with subgradient $\phi'$ and $\kappa$ is any function of $y$ \citep{Savage1971, Gneiting2011}.
	Likewise, the well known pinball or asymmetric absolute loss $\rho(y,\xi) = (\one\{y\le \xi\} - \alpha)(\xi - y)$ is strictly consistent for the lower $\alpha$-quantile on the class of distributions with finite mean and where the lower $\alpha$-quantile coincides with the upper $\alpha$-quantile.
	Moreover,  subject to regularity and integrability conditions on $\rho$ and richness conditions on $\F$, a loss $\rho$ is (strictly) consistent for the lower $\alpha$-quantile, $\alpha\in(0,1)$, if and only if it is a \emph{generalized piecewise linear loss function} 
	\begin{align}
		\label{eqn:GPLLoss}
		\rho(y,\xi) = (\one\{y\le \xi\} - \alpha)(g(\xi) - g(y)) + \kappa(y),
	\end{align}
	where $g$ is (strictly) increasing \citep{Gneiting2011b}.
	
	Similar characterization results exist for other functionals such as expectiles or the pairs consisting of the mean and variance or the quantile and Expected Shortfall \citep{Gneiting2011, FisslerZiegel2016}.
	The characterization results in \eqref{eqn:BregmanLoss} and \eqref{eqn:GPLLoss} rely on the fact that the related classes $\mathcal{F}$ are convex and rich enough. 
	E.g., if we restrict attention to symmetric distributions only, the mean equals the median, and loss functions of the form given in \eqref{eqn:BregmanLoss} or \eqref{eqn:GPLLoss} would elicit the mean and median.
	
	The following definition develops similar notions of consistency for the setting of M-estimation with an underlying class $\Z$ implying that $\Gamma$ is defined on the class of conditional distributions $\F_{\Y|\X}$.
	
	\begin{definition}[Model-consistency]
		\label{defn:model-consistency}
		Suppose Assumption \ref{ass:unique model} holds for the parametric model $m\colon\R^p\times\Theta\to\Xi$ and the functional $\Gamma\colon\F_{\Y|\X}\to\Xi$.
		Let $\rho\colon \R\times \Xi\to\R$ be a loss function such that $\E\big|\rho\big(Y,m(X,\theta)\big)\big|<\infty$ for all $(Y,X) \in\Z$ and for all $\theta\in\Theta$. 
		\begin{enumerate}[label=(\roman*)]
			\item
			The loss $\rho$ is \emph{unconditionally $\F_\Z$-model-consistent} for the model $m$ if 
			\be{eq:unconditional model consistency}
			\E\big[\rho\big(Y,m(X,\theta_0)\big)\big]\le \E\big[\rho\big(Y,m(X,\theta)\big)\big]
			\qquad  \text{for all } (Y,X)\in\Z, \  \text{for all } \theta\in\Theta\,.
			\ee 
			Moreover, $\rho$ is \emph{strictly unconditionally $\F_\Z$-model-consistent} for the model $m$ if equality in \eqref{eq:unconditional model consistency} implies that $\theta = \theta_0$.
			\item
			The loss $\rho$ is \emph{conditionally $\F_\Z$-model-consistent} for the model $m$ if 
			\be{eq:conditional model consistency}
			\E\big[\rho\big(Y,m(X,\theta_0)\big)\big|X\big]\le \E\big[\rho\big(Y,m(X,\theta)\big)\big|X\big]\ \as
			\quad \text{for all }  (Y,X)\in\Z, \  \text{for all }  \theta\in\Theta\,.
			\ee
			Moreover, $\rho$ is \emph{strictly conditionally $\F_\Z$-model-consistent} for the model $m$ if almost sure equality in \eqref{eq:conditional model consistency} implies that $\theta=\theta_0$.
		\end{enumerate}
	\end{definition}
	
	
	The concept of unconditional model-consistency is the central condition for consistent M-estimation: \citet[Property 3.3 and 3.4]{Gourieroux1987} show equivalence of these two notions in terms of first order conditions and under some regularity assumptions.
	Also see condition (i) in \citet[Theorem 2.1]{NeweyMcFadden1994}, \citet[Assumption (A-4)]{Huber1967}, where their remaining assumptions are merely regularity conditions.
	In contrast, the conditional notion is appealing since it bridges the gap between consistency for $\Gamma$ according to Definition \ref{defn:consistency} and unconditional model-consistency for $m$ in the proof of Theorem \ref{thm:hierarchy consistency} below. 	It can still be practically useful when we resort to nonparametric kernel regressions or in the presence of repeated observations of $X$, e.g.\ if $X$ consists of categorical variables only.
	
	While the classical notion of consistency in \eqref{eq:strict consistency} and the unconditional model version in \eqref{eq:unconditional model consistency} are closely related, they are crucially different in that in the latter, the expectation is also taken with respect to the covariates.
	Establishing and finding reasonable conditions for their equivalence is indeed not trivial as shown in Theorem \ref{thm:hierarchy consistency} below.

	\section{Main Result and Discussion}
	\label{sec:main result}
	
	To present our main Theorem \ref{thm:hierarchy consistency}, we introduce and discuss the following two assumptions.
	\begin{assumption}\label{ass:separability}
		For all $X\in\X$, the map $m(X,\cdot)\colon\Theta\to\Xi$ is surjective almost surely. 
		For all $(Y,X)\in\Z$ the conditional expectation $\E\big[\rho\big(Y,m(X,\theta)\big)\big|X\big]$ is continuous in $\theta$ almost surely.
	\end{assumption}
	The surjectivity in Assumption \ref{ass:separability} can usually be fulfilled by a sensible choice of $\Xi$, and the smoothness condition on the expected loss is standard in the literature, see e.g., \citet[Section 2.3]{NeweyMcFadden1994}.
	
	\begin{assumption}\label{ass:reweighting}
		For any $Z = (Y,X)\in\Z$ and any event $A\in \sigma(X)$ with positive probability $\P(A)>0$, there is some $\widetilde Z\in\Z$ such that $\P\big( \widetilde Z \in B\big) = \P\big(Z\in B\,|A\big)$ for all Borel sets $B\subseteq \R^{d+p}$.
	\end{assumption}
	
	Assumption \ref{ass:reweighting} is a richness condition on the class of possible data generating processes (DGP) in $\Z$ as for any process $Z = (Y,X) \in \mathcal{Z}$, and any set $A$ with positive probability, it stipulates that $\mathcal{Z}$ is rich enough to contain a process $\widetilde Z = (\widetilde Y, \widetilde X)$ as specified in Assumption \ref{ass:reweighting}. 
	Crucially, it yields that $\P(\widetilde Y\in C\, | \widetilde X) = \P(Y\in C\, | X,A)$ for all Borel sets $C\subseteq \R$. This, together with  Assumption \ref{ass:unique model}, implies that the correctly specified parameter and hence the semiparametric model, is the same under the distributions $F_{\wt Z}$ and $F_Z$; in formulae, $\theta_0(F_{\wt Z}) = \theta_0(F_Z)$. 
	Recall that in estimation, $\Z$ captures the flexibility about the underlying and in practice unknown DGP, such that a large $\Z$ is desirable in order to obtain an estimation method which is applicable to a wide range of distributions of $Z \in \Z$.
	Thus,  Assumption \ref{ass:reweighting} intuitively means that given a certain plausible and correctly specified DGP, and given a measurable set $B\subset \R^p$ of possible values for the covariates $X$ which is attained with positive probability, i.e.\ $\P(X\in B)>0$, restricting the DGP to these values of covariates must be feasible. 
	E.g., if income $Y$ is studied in dependence of years after graduation, $X_1$, and further covariates $X_2, \ldots, X_p$, one might as well study income of persons at most 5 years after their graduation, $X_1\le 5$.
	Then, in a correctly specified model, the true but unknown parameter $\theta_0$ remains the same, no matter whether considering the whole population or only persons within 5 years after their graduation.
	Further recall that as discussed after \eqref{eqn:GPLLoss}, the characterization results for strictly consistent loss functions already rely on richness conditions on the classes of distributions.
	
	\begin{theorem}
		\label{thm:hierarchy consistency}
		Under Assumption \ref{ass:unique model} the following holds for a loss $\rho\colon \R\times \Xi\to\R$.
		\begin{enumerate}[label=\rm(\roman*)]
			\item
			If $\rho$ is (strictly) $\F_{\Y|\X}$-consistent for $\Gamma$ then it is (strictly) conditionally $\F_\Z$-model-consistent for the model $m$.
			\item
			Under Assumption \ref{ass:separability} if $\rho$ is conditionally $\F_\Z$-model-consistent for $m$, 
			there is a modification $\widetilde \F_{\Y|\X}$ of $\F_{\Y|\X}$ such that $\rho$ is $\widetilde \F_{\Y|\X}$-consistent for $\Gamma$.
			\item
			If $\rho$ is (strictly) conditionally $\F_\Z$-model consistent for $m$ then it is (strictly) unconditionally $\F_\Z$-model-consistent for $m$.
			\item
			Under Assumption \ref{ass:reweighting} if $\rho$ is (strictly) unconditionally $\F_\Z$-model-consistent for $m$ then it is also (strictly) conditionally $\F_\Z$-model-consistent for $m$.
		\end{enumerate}
	\end{theorem}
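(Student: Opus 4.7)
The plan is to treat the four implications in the order (i), (iii), (iv), (ii), with the heart of the argument being (ii).

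Part (i) is a direct application of Definition~\ref{defn:consistency} specialized to the conditional distribution. For any $(Y,X)\in\Z$, Assumption~\ref{ass:unique model} gives $\Gamma(F_{Y|X})=m(X,\theta_0)$ almost surely; plugging $F = F_{Y|X}$ and action $\xi = m(X,\theta)$ into the consistency inequality yields the conditional model-consistency inequality. For strictness, a.s.\ equality implies $m(X,\theta) = m(X,\theta_0)$ a.s., and the uniqueness part of Assumption~\ref{ass:unique model} forces $\theta = \theta_0$. Part (iii) is a tower-property argument: taking $\E[\,\cdot\,]$ on both sides of the conditional inequality gives unconditional model-consistency, and if the unconditional expectations also coincide, then the non-negative difference of the conditional expectations has zero integral, hence vanishes almost surely, so strict conditional model-consistency yields $\theta = \theta_0$.

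For part (iv) I would argue by contraposition. If conditional model-consistency fails at some $\theta$, then the set $A := \{\E[\rho(Y,m(X,\theta_0))|X] > \E[\rho(Y,m(X,\theta))|X]\}$ is $\sigma(X)$-measurable with $\P(A)>0$, so integration over $A$ yields a strict excess. Assumption~\ref{ass:reweighting} then supplies $\widetilde Z = (\widetilde Y,\widetilde X)\in\Z$ whose law is $\P(Z\in\cdot\,|A)$; the discussion following Assumption~\ref{ass:reweighting} secures $\theta_0(F_{\widetilde Z})=\theta_0(F_Z)$, so that
\begin{equation*}
\E\big[\rho(\widetilde Y, m(\widetilde X, \theta_0))\big] > \E\big[\rho(\widetilde Y, m(\widetilde X, \theta))\big],
\end{equation*}
contradicting unconditional model-consistency for $\widetilde Z$. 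The strict version follows directly: a.s.\ conditional equality for some $\theta\neq\theta_0$ integrates to unconditional equality, contradicting strict unconditional consistency.

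Part (ii) is the main obstacle. The difficulty is that conditional model-consistency only supplies, for each fixed $\theta$, an a.s.\ inequality whose exceptional null set depends on $\theta$, whereas $\F_{\Y|\X}$-consistency for $\Gamma$ demands a simultaneous inequality across all actions $\xi \in \Xi$. My plan is to pass to a countable dense subset $\{\theta_n\}_{n\in\N} \subset \Theta$ (available since $\Theta\subseteq\R^q$ is separable), form the union $N$ of the countably many exceptional null sets, and invoke the $\theta$-continuity in Assumption~\ref{ass:separability} to extend the inequality from $\{\theta_n\}$ to all of $\Theta$ on $N^c$. The surjectivity of $m(X,\cdot)$ from Assumption~\ref{ass:separability} then converts the variational inequality in $\theta$ into one in $\xi\in\Xi$: given $\xi$, pick $\theta$ with $m(X,\theta)=\xi$ to obtain $\E[\rho(Y,\Gamma(F_{Y|X}))|X]\le \E[\rho(Y,\xi)|X]$ on $N^c$. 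Finally, I would define the modification $\widetilde\F_{\Y|\X}$ by altering each regular conditional distribution on the null set $N$ so that the consistency inequality holds everywhere, yielding $\widetilde\F_{\Y|\X}$-consistency of $\rho$ for $\Gamma$.
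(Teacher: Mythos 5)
Your proposal is correct and follows essentially the same route as the paper's proof: direct substitution plus uniqueness of $\theta_0$ for (i), the tower property for (iii), the reweighted process $\widetilde Z$ from Assumption~\ref{ass:reweighting} applied to the $\sigma(X)$-measurable exceptional set for (iv), and for (ii) the countable-dense-subset/null-set union argument combined with the continuity and surjectivity in Assumption~\ref{ass:separability}, concluding on a modification of $\F_{\Y|\X}$. The only differences are cosmetic (you use a general countable dense subset of $\Theta$ where the paper uses $\Theta\cap\mathbb{Q}^q$, and you split the strictness claim in (iv) into a separate integration step), neither of which changes the substance.
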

	
	Theorem \ref{thm:hierarchy consistency}, whose proof can be found in Appendix \ref{sec:proof}, provides two main implications; see Figure \ref{fig:implications} for a visualisation: 
	First, a combination of (i) and (iii) justifies the use of strictly $\F_{\Y|\X}$-consistent losses for $\Gamma$ in the context of M-estimation. This is well known in the literature, e.g.\ \citet[Section 9]{GneitingRaftery2007} describe this under the term optimum score estimation.
	The proofs of (i) and (iii) are straight-forward, and for special cases they can be found, e.g.\ in the proof of \citet[Theorem 1]{Patton2019}.
	
	\begin{figure}
		\centering 
		\scalebox{0.9}{
			\begin{tikzpicture}
				\node[state, minimum size=4cm] (q1) {\small \begin{tabular}{c} $\F_{\Y|\X}$-consistency \\ for $\Gamma$ \end{tabular}};
				\node[state, right of=q1, minimum size=4cm] (q2)  {\small \begin{tabular}{c} conditional \\ $\F_{\Z}$-consistency \\ for $m$ \end{tabular}};
				\node[state, right of=q2, minimum size=4cm] (q3) {\small \begin{tabular}{c} unconditional \\ $\F_{\Z}$-consistency  \\ for $m$ \end{tabular}};
				\draw   
				(q1) edge[->, bend left, above, line width=2pt] node{(i)} (q2)
				(q2) edge[bend left, below,  line width=2pt] node{(ii)} (q1)
				(q2) edge[bend left, above,  line width=2pt] node{(iii)} (q3)
				(q3) edge[bend left, below,  line width=2pt] node{(iv)} (q2);
			\end{tikzpicture}
		}
		\caption{A visualisation of the implications of Theorem \ref{thm:hierarchy consistency}.}
		\label{fig:implications}
	\end{figure}
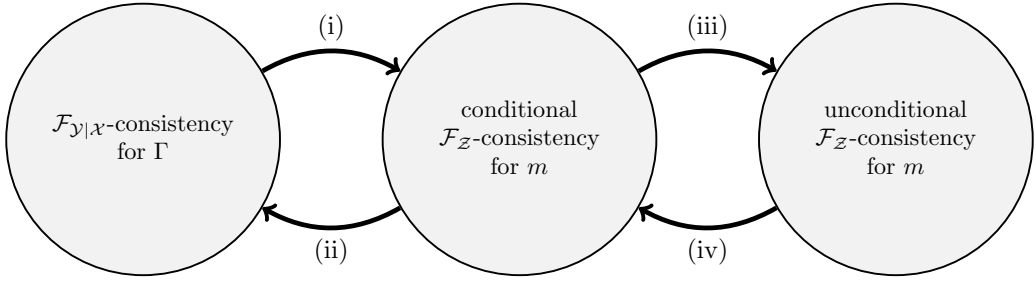
	
	Second, and more important for our purposes is the reverse implication, combining (ii) and (iv). 
	It asserts that, under appropriate assumptions, an unconditionally $\F_{\Z}$-model-consistent loss for $m$ is necessarily $\F_{\Y|\X}$-consistent for $\Gamma$. 
	Thus, exploiting known characterization results for $\F_{\Y|\X}$-consistent losses for many relevant functionals $\Gamma$, it constitutes an effective and original bound on the class of consistent M-estimators. 
	Notice that \emph{strictness} of the $\widetilde \F_{\Y|\X}$-consistent losses cannot be established in part (ii) of Theorem \ref{thm:hierarchy consistency}.
	While a stronger version of this result including the strictness would be desirable, its lack hardly diminishes the applicability of the results since characterization results for  non-strict $\F_{\Y|\X}$-consistent losses are available in the literature and these are almost as strong as the ones  for strictly consistent losses \citep{Gneiting2011}.
	E.g., we obtain all consistent losses for the mean functional when possibly non-strictly convex functions $\phi$ are used in \eqref{eqn:BregmanLoss}, and for quantiles when possibly non-strictly increasing functions $g$ are used in  \eqref{eqn:GPLLoss}.
	Further note that for practical or intuitive purposes, the technical distinction between $\F_{\Y|\X}$ and a modification $\widetilde \F_{\Y|\X}$ thereof is inessential, see Appendix \ref{sec:App1}.
	
	E.g., Theorem \ref{thm:hierarchy consistency} implies that the class of M-estimators for conditional mean models are characterized by loss functions of the form \eqref{eqn:BregmanLoss} while for conditional quantiles, a loss of the form \eqref{eqn:GPLLoss} must be used.
	This enables to find beneficial choices of $\phi$, $g$ and $\kappa$ in terms of efficiency and equivariance \citep{DFZ2020} or integrability of the loss.
	Similarly, one can leverage the flexibility of the class of consistent losses for VaR and ES provided in \cite{FisslerZiegel2016}.
	

	As of yet, implications in the direction of the points (ii) and (iv) of Theorem \ref{thm:hierarchy consistency} have only been provided for special cases or under much stronger conditions:
	First, \citet{Gourieroux1987} consider M-estimators for general semiparametric models by restricting attention to parameters identified by a set of conditional moment restrictions, as given by their equation (4.5). 
	As a consequence, their main result, Property 4.7, characterizes the functional form of the losses' first-order conditions and is hence closest to our Proposition S3 in the Supplementary Material.
	In contrast, our Theorem \ref{thm:hierarchy consistency} allows us to conveniently connect M-estimation to known classes of strictly consistent loss functions from the literature on forecast evaluation.
	\citet{Gourieroux1987} further operate under a stronger and less interpretable richness condition on the class of distributions; compare their Assumption A.7 i) to our Assumption \ref{ass:reweighting}.
	Second, \citet[Theorem 2]{Komunjer2005} shows a necessary condition for M-estimation if $\Gamma$ is some quantile.
	However, a richness condition, corresponding to our Assumption \ref{ass:reweighting}, is only assumed implicitly in the proof when quantifying over all model classes before their equation (19).
	In contrast, our Theorem \ref{thm:hierarchy consistency} rigorously shows this relation for semiparametric models for any elicitable functional.

	\section*{Acknowledgement}
	
	T.~Dimitriadis gratefully acknowledges support of the German Research Foundation (DFG) through grant number 502572912, of the Heidelberg Academy of Sciences and Humanities and the Klaus Tschira Foundation. J.~Ziegel gratefully acknowledges support of the Swiss National Science Foundation.
	We are very grateful to Jana Hlavinov\'a for a careful proofreading and valuable feedback on an earlier version of this paper.

	\section*{Supplementary Material}
	\label{SM}
	
	The Supplementary Material derives results corresponding to Theorem \ref{thm:hierarchy consistency} for zero  (Z) estimators and identification functions.

	\appendix
	\section*{Appendix}
	\section{Technical Details on the Class of Conditional Distributions}
	\label{sec:App1}
	
	$\F_{\Y|\X}$ is a collection of distributions such that for any $ (Y,X)\in\Z$ a regular version of the conditional distribution $F_{Y|X}$ is an element of $\F_{\Y|\X}$ almost surely.
	Recall that $F_{Y|X}$ is unique almost surely. 
	Therefore, $\F_{\Y|\X}$ is induced by a map $\Z\ni Z\mapsto \Omega_Z\in\{\Omega'\in \mathcal A \,|\, \P(\Omega')=1\}$ such that $\F_{\Y|\X}$ is the union of all collections $\{ F_{Y|X}(\cdot,\omega)\,|\, \omega \in \Omega_Z\}$ where $Z\in\Z$.
	Since there are several such maps $\Z\to \{\Omega'\in \mathcal A \,|\, \P(\Omega')=1\}$, 
	$\F_{\Y|\X}$ is not unique, but has several \emph{modifications} corresponding to the choices of this map $Z\mapsto \Omega_Z \in \{\Omega'\in \mathcal A \,|\, \P(\Omega')=1\}$.

	\section{Proof of Theorem \ref{thm:hierarchy consistency}}
	\label{sec:proof}

		\begin{proof}[Proof of Theorem \ref{thm:hierarchy consistency}]
			(i)
			Let $Z=(Y,X)\in\Z$ and $\theta_0 = \theta_0(F_Z)$. Suppose that $\theta\neq \theta_0$. Then $\E\big[\rho\big(Y,m(X,\theta_0)\big)\big|X\big] = \E\big[\rho\big(Y,\Gamma(F_{Y|X})\big)\big|X\big] \le \E\big[\rho\big(Y,m(X,\theta)\big)\big|X\big]$ due to the $\F_{\Y|\X}$-consistency of $\rho$. Invoking Assumption \ref{ass:unique model}, we get $\P(m(X,\theta_0)\neq m(X,\theta))>0$. Therefore, if $\rho$ is (strictly) $\F_{\Y|\X}$-consistent for $\Gamma$, it is also (strictly) $\F_\Z$-model-consistent for $m$.
			
			(ii)
			Suppose that $\rho$ is conditionally $\F_\Z$-model consistent and let $(Y,X)\in\Z$. Then for all $\theta\in \Theta$ it holds that 
			\(
			\P\Big(\E\big [\rho\big(Y, m(X,\theta_0)\big)\big|X\big]\le \E\big [\rho\big(Y, m(X,\theta)\big)\big|X\big]\Big)=1.
			\)
			Since the countable union of null sets is again a null set, this implies that 
			\(
			\P\Big(\E\big [\rho\big(Y, m(X,\theta_0)\big)\big|X\big]\le \E\big [\rho\big(Y, m(X,\theta)\big)\big|X\big] \ \forall \theta\in\Theta\cap \mathbb Q^q\Big)=1,
			\)
			where $\mathbb Q$ is the set of all rationals.
			Using the fact that $\Theta\cap \mathbb Q^q$ is dense in $\Theta$ and due to the stipulated continuity of the conditional expectations of the losses in Assumption \ref{ass:separability}, we obtain that $\P(A)=1$ where
			\begin{align*}
				A = \Big\{\omega\in\Omega\colon\E\big [\rho\big(Y, m(X,\theta_0)\big)\big|X\big](\omega)
				= \E\big [\rho\big(Y, \Gamma(F_{Y|X})\big)\big|X\big](\omega) \\
				\le \E\big [\rho\big(Y, m(X,\theta)\big)\big|X\big](\omega) \ \forall \theta\in\Theta\Big\} \in \mathcal A,
			\end{align*}
			where we also used Assumption \ref{ass:unique model}.
			Let $A'\in\mathcal A$ be the set with probability one such that $m(X(\omega),\cdot)$ is surjective for all $\omega\in A'$.
			Then for all $\omega\in A\cap A'$ we get that
			\[
			\int \rho\big(y,\Gamma(F_{Y|X}(\cdot,\omega))\big)F_{Y|X}(\diff y, \omega)
			\le \int \rho\big(y,m(X(\omega),\theta)\big)F_{Y|X}(\diff y, \omega)
			\]
			for all $\theta\in\Theta$. Finally, exploiting the surjectivity of the model, we arrive at the claim.
			Clearly, it is only possible to establish this assertion on a modification of $\F_{\Y|\X}$; see the first paragraph of Section \ref{subsec:Notation and Setting} and Appendix \ref{sec:App1}.

			(iii)
			This is a standard application of the tower property together with the positivity of the expectation.
			
			(iv)
			Assume that $\rho$ is not strictly conditionally $\F_\Z$-model-consistent for $m$. That means there exists $Z= (Y,X)\in\Z$ with true parameter $\theta_0 = \theta_0(F_Z)$ such that for some $\theta\neq \theta_0$ the event $
			A = \big\{\omega\, |\, \E\big[\rho\big(Y,m(X,\theta)\big) - \rho\big(Y,m(X,\theta_0)\big)\big| X\big](\omega) \le 0\big\}$
			has positive probability.
			Let $\wt Z= (\wt Y, \wt X)\in\Z$ be the pair given by Assumption \ref{ass:reweighting} with $A$ specified above. Then clearly
			\be{eq:proof inequ}
			\E\big[\rho\big(\wt Y,m(\wt X,\theta)\big) - \rho\big(\wt Y,m(\wt X,\theta_0)\big)\big] \\
			= \E\Big[\E\big[\rho\big(\wt Y,m(\wt X,\theta)\big) - \rho\big(\wt Y,m(\wt X,\theta_0)\big)\big|\wt X\big]\Big]  \le0.
			\ee
			This means that $\rho$ is not strictly unconditionally $\F_{\Z}$-model-consistent for $m$. The argument when we assume that $\rho$ is merely conditionally $\F_\Z$-model-consistent works analogously, where 
			we replace the inequalities in the definition of $A$ and in \eqref{eq:proof inequ} with strict inequalities.
		\end{proof}

\singlespacing
\setlength{\bibsep}{2pt plus 0.3ex}
\def\bibfont{\small}

\bibliographystyle{apalike}
\bibliography{biblio_Bridging}

\newpage
\onehalfspacing
\setcounter{page}{1}
\setcounter{footnote}{0}
\begin{center}
	SUPPLEMENTARY MATERIAL FOR   \vspace{10pt} \\
	{\Large\bf {Characterizing M-estimators} \vspace{10pt} }\\
	Timo Dimitriadis, Tobias Fissler and Johanna Ziegel \\
	\today \\ 
\end{center}

\renewcommand{\thesection}{S\arabic{section}}   
\renewcommand{\thepage}{S.\arabic{page}}  
\renewcommand{\thetable}{S\arabic{table}}   
\renewcommand{\thefigure}{S\arabic{figure}}   
\renewcommand{\thetheorem}{S\arabic{theorem}}   
\renewcommand{\thedefinition}{S\arabic{definition}}   
\renewcommand{\theproposition}{S\arabic{proposition}}   
\renewcommand{\theexample}{S\arabic{example}}   
\renewcommand{\theequation}{S\arabic{equation}}   

\setcounter{section}{0}
\setcounter{table}{0}
\setcounter{figure}{0}
\setcounter{theorem}{0}
\setcounter{definition}{0}
\setcounter{proposition}{0}
\setcounter{example}{0}
\setcounter{equation}{0}

This supplement  provides results on linking zero (Z) estimation to identification functions from forecast evaluation.	It further illustrates why a full analogon of Theorem \ref{thm:hierarchy consistency} in the main text is not achievable by providing corresponding counterexamples.
Section \ref{sec:intro} introduces the setup and notation and Section \ref{sec:main result_Zest} provides results and a discussion thereof. All proofs are given in Section \ref{sec:proofs}.

\section{Setup, Notation and Definitions}
\label{sec:intro}


Recall the estimation problem for semiparametric models of a general functional $\Gamma$ as outlined in the main text, where we shall use the same notation.
A correctly specified parametric model $m(X,\theta)$ satisfies
$\Gamma(F_{Y|X}) = m(X,\theta_0) \in \Xi \subseteq \mathbb{R}^k$ for some unique parameter $\theta_0\in\Theta \subseteq \mathbb{R}^q$.
The task is to estimate the parameter $\theta_0$ based on data $(Y_t,X_t)$, $t=1, \ldots, N$. 
A standard alternative to M-estimation is (zero) Z-estimation
\begin{align}
	\label{eq:Z-est}
	\widehat \theta_{Z,T} = \argmin_{\theta \in\Theta} \Big\|\frac{1}{T} \sum_{t=1}^T 
	\psi (Y_t , X_t,\theta)
	\Big\|^2,
\end{align}
based on some $s$-dimensional functions $\psi(Y_t , X_t,\theta)$, that could also be time-varying.
The latter are often called \textit{moment conditions} or \textit{identification functions} for $\theta_0$, satisfying the \emph{strict unconditional identification} condition
\begin{align}
	\label{eq:model ind}
	\Big(\E\big[\psi (Y_t , X_t,\theta)\big]=0 
	\quad \Longleftrightarrow \quad \theta=\theta_0\Big) \qquad \text{for all } \theta\in\Theta \quad \text{for all } t\in\N.
\end{align}
Motivated by the zero-condition in \eqref{eq:model ind}, the estimator in \eqref{eq:Z-est} is called Z-estimator.
We restrict attention to the \textit{exactly identified} case of $s=q$, implying as many moment conditions as model parameters.
Extensions to the case $s > q$ are possible in the framework of generalized method of moments (GMM) estimation \citep{Hansen1982, NeweyMcFadden1994}.


Similar to the strict model-consistency of loss functions, the strict unconditional identification \eqref{eq:model ind} is the core condition for the consistency of the Z-estimator $\widehat \theta_{Z,T}$ and the choice of $\psi$ is the key ingredient of Z-estimation, see e.g., \cite{Gourieroux1987}, \cite{Newey1990}, \cite{NeweyMcFadden1994}.
We continue to formally introduce the notions of identification functions from the literature on forecast evaluation, where these functions are deployed to check (conditional) calibration of forecasts \citep{NoldeZiegel2017, DimiPattonSchmidt2019}, akin to a goodness-of-fit test. 


\begin{definition}[Identification function and identifiability]
	\label{defn:identifiability}
	A  map $\ph\colon \R^d \times \Xi\to\R^k$ is called an \emph{$\F$-identification function} for a functional $\Gamma\colon \F\to\Xi \subseteq \R^k$ if
	$\ph(\cdot, \xi)$ is $F$-integrable for all $F\in\F$ and for all $\xi\in\Xi$, and if
	\[
	\int \ph \big(y,\Gamma(F)\big) \dint F(y) =0 \qquad \text{for all } F\in\F.
	\]
	If additionally 
	\[
	\left(\int \ph \big(y,\xi \big) \dint F(y)= 0 \ \implies \  \xi = \Gamma(F)\right) \qquad \text{for all } F\in\F, \  \text{for all } \xi \in \Xi,
	\]
	it is a \emph{strict $\F$-identification function} for $\Gamma$.
	A functional $\Gamma\colon \F\to\Xi$ is called \emph{identifiable} if there is a strict $\F$-identification function for it.
\end{definition}

In this definition, we restrict attention to $k$-dimensional identification functions.
This is motivated by the characterization result of \cite{DFZ_OsbandID}, who show that, given a strict identification function $\ph(y,\xi)$, the whole class of strict identification functions is given by the set
\begin{align}
	\label{eq:all identification functions}
	\big\{ h(\xi)\ph(y,\xi)\,|\, h\colon \Xi \to\R^{k\times k}, \ \det (h(\xi))\neq0 \ \text{for all }\xi\in\Xi \big\}.
\end{align}
This implies that identification functions of lower dimension than $k$ cannot be strict, while ones with dimension greater $k$ contain redundancies; see \citet[Remark 6]{DFZ_OsbandID} for a detailed discussion.

For estimation of semiparametric models, and similar to Definition 1 we introduce an unconditional and a conditional notion of identification functions for models, 
where the latter coincides with the notion of conditional moment conditions (or restrictions), as given, e.g.\ in \cite{Newey1993} and the references therein.

For this, we make use of the observation that by Assumption \ref{ass:unique model}, the parameter $\theta_0 = \theta_0(F_Z)$ can be interpreted as a functional $\theta_0 \colon \F_\Z\to\Theta\subseteq \R^q$  by mapping a distribution $F_Z\in\F_\Z$ to the unique $\theta_0(F_Z)\in\interior(\Theta)$ such that (2) is satisfied.


\begin{definition}
	\label{defn:H-identification}
	Under Assumption \ref{ass:unique model}, let $\theta_0\colon\F_\Z\to\Theta\subseteq \R^q$ be the functional given by \eqref{eq:unique model}. Let $\psi\colon \R^d\times \R^p\times \Theta\to\R^q$ be a function such that $\E\|\psi(Y,X, \theta)\|_1<\infty$ for all $(Y,X)\in\Z$ and for all $\theta\in\Theta$. 
	\begin{enumerate}[label=(\roman*)]
		\item
		The function $\psi$ is an \emph{unconditional $\F_\Z$-identification function} for $\theta_0\colon\F_\Z\to\Theta$ if 
		\[
		\E\big[\psi(Y,X,\theta_0(F_Z))\big] = 0 \qquad  \text{for all } Z=(Y,X)\in\Z\,.
		\]
		It is a \emph{strict unconditional $\F_\Z$-identification function} for $\theta_0$ if additionally 
		\[
		\Big(\E[\psi(Y,X,\theta)] = 0 \ \implies \  \theta = \theta_0(F_Z)\Big) \qquad  \text{for all } Z=(Y,X)\in\Z, \  \text{for all } \theta\in \Theta\,.
		\]
		\item
		The function $\psi$ is a \emph{conditional $\F_\Z$-identification function} for $\theta_0\colon\F_\Z\to\Theta$ if 
		\[
		\E\big[\psi(Y,X,\theta_0(F_Z))\big  |X\big] = 0 \quad \as
		\qquad  \text{for all } Z=(Y,X)\in\Z\,.
		\]
		It is a \emph{strict conditional $\F_\Z$-identification function} for $\theta_0$ if additionally 
		\[
		\Big(\E\big[\psi(Y,X,\theta)\big  |X\big] = 0 \quad \as \ \implies \ \theta = \theta_0(F_Z)\Big)
		\qquad  \text{for all } Z=(Y,X)\in\Z, \  \text{for all }  \theta\in \Theta\,.
		\]
	\end{enumerate}
\end{definition}


\section{Main result and discussion}
\label{sec:main result_Zest}

The following Proposition gives counterparts of Theorem \ref{thm:hierarchy consistency} (i) and (ii), with similar attenuations with respect to the \emph{strictness} as in Theorem \ref{thm:hierarchy consistency} (ii).


\begin{proposition}
	\label{prop:hierarchy ident}
	Under Assumption \ref{ass:unique model}, the following implications hold for $\ph\colon\R^d\times\Xi\to\R^k$:
	\begin{enumerate}[label=(\roman*)]
		\item
		If $\ph$ is a (strict) $\F_{\Y|\X}$-identification function for $\Gamma$ then $\R^d\times \R^p \times \Theta\ni (y,x,\theta) \mapsto \ph\big(y,m(x,\theta)\big)$ is a (strict) conditional $\F_\Z$-identification function for $\theta_0$.
		\item
		If $(y,x,\theta) \mapsto \ph\big(y,m(x,\theta)\big)$ is a conditional $\F_\Z$-identification function for $\theta_0$ 
		then $\ph$ is an $\F_{\Y|\X}$-identification function for $\Gamma$.
	\end{enumerate}
\end{proposition}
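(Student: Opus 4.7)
The claim is the direct identification analogue of parts (i) and (ii) of Theorem \ref{thm:hierarchy consistency}, and my plan is to follow essentially the same template in a slightly more streamlined form, since no integrability or continuity step corresponding to the consistency setting is needed here.

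For part (i), I would fix $Z = (Y,X) \in \Z$ and let $\theta_0 = \theta_0(F_Z)$. Assumption \ref{ass:unique model} provides a null set off which $m(X,\theta_0) = \Gamma(F_{Y|X})$. Conditioning on $X$ via a regular version, the $\F_{\Y|\X}$-identification property of $\ph$ for $\Gamma$ evaluated fiberwise in $\omega$ gives
\[
\E\big[\ph(Y,m(X,\theta_0)) \,\big|\, X\big] = \int \ph\big(y,\Gamma(F_{Y|X})\big)\,F_{Y|X}(\diff y) = 0 \ \as,
\]
which is precisely the conditional identification property of $\psi(y,x,\theta) := \ph(y,m(x,\theta))$ at $\theta_0$. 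For the strictness claim, I would suppose $\E[\ph(Y,m(X,\theta)) \mid X] = 0$ almost surely for some $\theta \in \Theta$; applying strictness of $\ph$ fiberwise yields $m(X,\theta) = \Gamma(F_{Y|X}) = m(X,\theta_0)$ almost surely, and the uniqueness clause of Assumption \ref{ass:unique model} then forces $\theta = \theta_0$.

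For part (ii), I would reverse the same chain. By hypothesis, $\E[\ph(Y,m(X,\theta_0)) \mid X] = 0$ almost surely for every $Z = (Y,X) \in \Z$, and by Assumption \ref{ass:unique model} this coincides almost surely with $\E[\ph(Y,\Gamma(F_{Y|X})) \mid X] = 0$. Writing the conditional expectation in its integral form on the complement of the exceptional null set and ranging over all $Z \in \Z$, one obtains, as in Appendix \ref{sec:App1} and the proof of Theorem \ref{thm:hierarchy consistency}(ii), a modification of $\F_{\Y|\X}$ on which $\int \ph(y,\Gamma(F))\,\diff F(y) = 0$ for every $F$, which is the desired $\F_{\Y|\X}$-identification property.

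The step I expect to matter most is the deliberate absence of a strict version in (ii), which mirrors the limitation of Theorem \ref{thm:hierarchy consistency}(ii). Strict conditional $\F_\Z$-identification only constrains $\int \ph(y,\xi)\,\diff F_{Y|X}(y) = 0$ for $\xi$ of the form $m(X,\theta)$ with $\theta \in \Theta$; without a surjectivity hypothesis on $m(X,\cdot)$ analogous to Assumption \ref{ass:separability}, these values need not exhaust $\Xi$, so one cannot preclude spurious zeroes of $\int \ph(y,\xi)\,\diff F(y)$ at $\xi \neq \Gamma(F)$. This is why strictness of $\ph$ cannot in general be recovered, and it explains the asymmetric wording of the proposition.
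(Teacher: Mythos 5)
Your argument is correct and follows essentially the same route as the paper's (very terse) proof: part (i) by evaluating the defining integral fiberwise and invoking the uniqueness clause of Assumption \ref{ass:unique model} for strictness, and part (ii) by the one-line identity $0= \E\big[\ph\big(Y,m(X,\theta_0)\big)\big|X\big] = \E\big[\ph\big(Y,\Gamma(F_{Y|X})\big)\big|X\big]$ read off on a modification of $\F_{\Y|\X}$. Your closing remark on why strictness cannot be recovered in (ii) without a surjectivity hypothesis is a sound addition not spelled out in the paper.
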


While counterparts to the conclusions of Theorem \ref{thm:hierarchy consistency} (iii) and (iv), connecting the conditional with the unconditional notion, would be desirable for identification functions, they seem out of reach in the general case.
To arrive at a counterpart of Theorem \ref{thm:hierarchy consistency} (iii) note that, by the tower property, any (strict) conditional $\F_\Z$-identification for $\theta_0$ is an unconditional $\F_\Z$-identification function for $\theta_0$. However, it generally fails to be strict; see Example \ref{exmp:counter-example linear model} below and the results of \citet{NeweyMcFadden1994}, \citet{Rothenberg1971}, \citet{Brown1983}, \citet{Roehrig1988}, and \citet{Komunjer2012} among many others.

Henceforth,  and similar to \citet[Equation (4.5)]{Gourieroux1987}, we restrict attention to conditional moment conditions of the form
\begin{align}
	\label{eq:psi_A definition}
	\psi_A(Y,X,\theta) = A(X, \theta) \ph\big(Y,m(X,\theta)\big),
\end{align}
where $\ph$ is a strict $\F_{\Y|\X}$-identification function for $\Gamma$, and $A(X,\theta)$ is a $(q\times k)$ instrument matrix.
This construction is motivated by the well-known fact that $\E\big[\ph\big(Y,m(X,\theta)\big)\big|X\big] =0$ $\as$ is equivalent to
\begin{align}
	\label{eq:conditional expectation}
	\E\big[a(X)^\intercal \ph\big(Y,m(X,\theta)\big)\big] =0 \qquad \text{for all measurable }\ a\colon \R^p\to\R^k,
\end{align}
such that $\E\big[\big|a(X)^\intercal \ph\big(Y,m(X,\theta)\big)\big|\big]<\infty$, where the functions $a(X) = a(X,\theta)$ can also depend on $\theta$. 
To render the equivalence in \eqref{eq:conditional expectation} statistically feasible, we reduce the number of functions $a(X,\theta)^\intercal$ to be finite, resulting in the instrument matrix $A(X,\theta)\in\R^{s\times k}$ in \eqref{eq:psi_A definition}.
Also see \cite{Newey1985}, \cite{Newey1990}, \cite{Bierens1990}, \cite{NoldeZiegel2017} for similar constructions in estimation and testing based on conditional moment restrictions.

\begin{rem}
	\label{rem:ChoicePhi}
	In light of the characterization result of \cite{DFZ_OsbandID}, this has the advantage that the choice of the identification function $\ph$ in \eqref{eq:psi_A definition} is actually irrelevant. Indeed, suppose one considers $\ph'\colon\R^d\times\Xi\to\R^k$ rather than $\ph$ in \eqref{eq:psi_A definition}. This means there is a matrix-valued function $h\colon\interior(\Xi)\to \R^{k\times k}$ of full rank such that $\ph'\big(y,m(x,\theta)\big) = h\big(m(x,\theta)\big) \ph\big(y,m(x,\theta)\big)$. 
	Then we can use the matrix $A'(x,\theta) = A(x,\theta) \big(h(m(x,\theta))\big)^{-1}$ such that
	\(
	A'(x,\theta)\ph'\big(y,m(x,\theta)\big) = A(x,\theta)\ph\big(y,m(x,\theta)\big).
	\)
	Consequently, it is no loss of generality to fix a certain strict $\F_{\Y|\X}$-identification function $\ph$ for $\Gamma$ since the remaining flexibility can always be captured through the choice of the instrument matrix $A$. 
\end{rem}



The following Proposition \ref{prop:primitive cond} provides a sufficient condition on the instrument matrix $A(X,\theta)$ such that $A(X,\theta)\ph\big(Y,m(X,\theta)\big)$ becomes a strict identification function for $\theta_0$.

\begin{proposition}
	\label{prop:primitive cond}
	Under Assumption \ref{ass:unique model}, let $\ph\colon \R^d\times \R^k\to\R^k$ be a strict $\F_{\Y|\X}$-identification function. Let $A\colon\R^p\times \Theta\to \R^{q\times k}$ be an instrument matrix such that
	the matrix
	$\E\big[A(X, \theta) D(X, \theta')\big]$
	has full rank, where
	\(
	D(X, \theta') = \nabla_{\theta} \E\big[\ph\big(Y, m(X,\theta)\big) |X\,\big] \,\big|_{\theta = \theta'}
	\)
	for all $(Y,X)\in\Z$ and for all $\theta, \theta'\in\Theta$ such that there is a $\lambda\in[0,1]$ with $\theta'=(1-\lambda)\theta_0 + \lambda \theta$. 
	Then $A(x,\theta) \ph\big(y,m(x,\theta)\big)$ is a strict unconditional $\F_\Z$-identification function for $\theta_0$.
\end{proposition}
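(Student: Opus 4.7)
My plan is to split the argument into the ``zero at $\theta_0$'' part and the strict ``nonzero away from $\theta_0$'' part. For the former, I would invoke Proposition \ref{prop:hierarchy ident}(i): since $\ph$ is a strict $\F_{\Y|\X}$-identification function for $\Gamma$, the composition $(y,x,\theta)\mapsto\ph\big(y,m(x,\theta)\big)$ is a conditional $\F_\Z$-identification function for $\theta_0$. Hence for any $Z=(Y,X)\in\Z$ with $\theta_0=\theta_0(F_Z)$ we have $\E\big[\ph(Y,m(X,\theta_0))\,\big|\,X\big]=0$ almost surely, and since $A(X,\theta_0)$ is $\sigma(X)$-measurable the tower property yields $\E\big[A(X,\theta_0)\ph(Y,m(X,\theta_0))\big]=0$ at once.

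For the strictness, I would fix $\theta\in\Theta$ with $\theta\neq\theta_0$ and aim to express
\[
\psi_A(\theta):=\E\big[A(X,\theta)\ph(Y,m(X,\theta))\big]
\]
as a matrix--vector product in $(\theta-\theta_0)$. Setting $f(X,\theta):=\E\big[\ph(Y,m(X,\theta))\,\big|\,X\big]$, the tower property gives $\psi_A(\theta)=\E\big[A(X,\theta)f(X,\theta)\big]$. Since $f(X,\theta_0)=0$ almost surely and $\theta\mapsto f(X,\theta)$ is differentiable with Jacobian $D(X,\cdot)$, the fundamental theorem of calculus applied pathwise gives
\[
f(X,\theta)=\left(\int_0^1 D\big(X,\theta_0+\lambda(\theta-\theta_0)\big)\dint\lambda\right)(\theta-\theta_0),
\]
and exchanging expectation with the $\lambda$-integral (by Fubini, justified under the standing integrability) produces
\[
\psi_A(\theta)=\bar M(\theta)\,(\theta-\theta_0),\qquad \bar M(\theta):=\int_0^1\E\big[A(X,\theta)D\big(X,\theta_0+\lambda(\theta-\theta_0)\big)\big]\dint\lambda.
\]
Because $\theta-\theta_0\neq 0$, it remains only to show that $\bar M(\theta)$ is nonsingular.

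This last step is the main obstacle and is precisely where the uniform full-rank hypothesis comes in. In the scalar case $q=1$, continuity of $\lambda\mapsto\E[A(X,\theta)D(X,\theta_0+\lambda(\theta-\theta_0))]$ forces this nonvanishing scalar to keep constant sign over $[0,1]$, so the integral is immediately nonzero. For general $q$, a family of nonsingular matrices need not integrate to a nonsingular matrix, so one must exploit the uniform hypothesis more carefully. The cleanest route I see is a componentwise mean value argument: for each basis vector $e_i\in\R^q$, the scalar map $\lambda\mapsto e_i^\top\E\big[A(X,\theta)\ph(Y,m(X,\theta_0+\lambda(\theta-\theta_0)))\big]$ vanishes at $\lambda=0$ by the conditional identification, and also at $\lambda=1$ under the contradiction hypothesis $\psi_A(\theta)=0$, so Rolle's theorem produces $\lambda_i\in(0,1)$ with $e_i^\top\E[A(X,\theta)D(X,\theta_0+\lambda_i(\theta-\theta_0))](\theta-\theta_0)=0$. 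Aggregating these componentwise constraints -- whose intermediate points $\lambda_i$ need not agree -- into a violation of the uniform per-slice full-rank hypothesis is the delicate point, and becomes transparent when the per-slice matrices are additionally sign-definite, a structural property that typically holds in practice. A minor ancillary subtlety is justifying the interchanges of differentiation and expectation used to define $D$ and to differentiate the scalar maps, which follow from the standard regularity built into the paper's set-up.
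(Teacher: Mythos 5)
Your proposal has a genuine gap at exactly the point you flag yourself: you never establish that $\bar M(\theta)=\int_0^1\E\big[A(X,\theta)D\big(X,\theta_0+\lambda(\theta-\theta_0)\big)\big]\dint\lambda$ is nonsingular. The hypothesis of the proposition only gives full rank of $\E\big[A(X,\theta)D(X,\theta')\big]$ for each \emph{fixed} $\theta'$ on the segment, and, as you correctly observe, an average of nonsingular matrices need not be nonsingular. Your Rolle-type repair produces a different intermediate point $\lambda_i$ for each component and you concede that aggregating these into a contradiction requires an additional sign-definiteness assumption that is not part of the statement. So the strictness half of the argument is not closed.

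The paper's proof avoids the integral form altogether: it applies the mean value theorem to $\theta\mapsto\E\big[\ph\big(Y,m(X,\theta)\big)\big|X\big]$ (which vanishes at $\theta_0$) to write $\E\big[\ph\big(Y,m(X,\theta)\big)\big|X\big]=D(X,\theta')(\theta-\theta_0)$ for a single intermediate point $\theta'$ on the segment, then multiplies by $A(X,\theta)$, takes expectations, and invokes the full-rank hypothesis for that $\theta'$ to conclude $\E\big[A(X,\theta)D(X,\theta')\big](\theta-\theta_0)\neq0$. The hypothesis is tailored precisely to this use --- it quantifies over \emph{all} $\theta'$ on the segment so that whichever intermediate point the mean value theorem delivers is covered --- whereas your integral-form decomposition converts the per-slice rank condition into a condition on the averaged matrix, which the hypothesis does not supply. (To be fair to your instincts: the single-point mean value theorem for $\R^k$-valued maps is itself only valid componentwise, and the intermediate point may depend on $X$, so the paper's argument glosses over a real subtlety that your integral form makes explicit; but within the intended reading of the hypothesis the paper's route closes the argument and yours does not.)
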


The following proposition takes the angle of `reverse engineering' establishing a counterpart to Theorem \ref{thm:hierarchy consistency} (iv): When an unconditional strict identification function is of the form \eqref{eq:psi_A definition} it establishes a sufficient condition on the instrument matrix $A$ to ensure that $\ph$ is a conditional strict identification function.

\begin{proposition}
	\label{prop:hierarchy idenfitication}
	Suppose that $q\ge k$ and that Assumptions \ref{ass:unique model} and \ref{ass:reweighting} hold. Moreover, assume that for all $Z=(Y,X)\in\Z$ with correctly specified parameter $\theta_0(F_Z)$ the map $A\colon\R^p\times \Theta\to\R^{q\times k}$ satisfies 
	\be{eq:full rank}
	\P\big(\rank\big(A(X,\theta_0(F_Z))\big) = k\big) = 1.
	\ee
	If
	$\psi_A\colon \R^d\times \R^p \times \Theta \to\R^q$, $\psi_A(y,x,\theta) = A(x,\theta) \psi (y,x,\theta )$
	is a strict unconditional $\F_\Z$-identification function for $\theta_0\colon\F_\Z\to\Theta$, then $ \psi\colon \R^d\times \R^p \times\Theta\to\R^k$ is a strict conditional $\F_\Z$-identification function for $\theta_0$.
\end{proposition}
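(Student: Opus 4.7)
The plan is to verify the two defining properties of a strict conditional $\F_\Z$-identification function separately. Fix $Z = (Y,X) \in \Z$ with true parameter $\theta_0 = \theta_0(F_Z)$.

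First I would dispatch the strictness direction, which is the easy one. Suppose $\E[\psi(Y,X,\theta)\,|\,X] = 0$ a.s.\ for some $\theta \in \Theta$. Since $A(X,\theta)$ is $\sigma(X)$-measurable, one pulls it out of the conditional expectation to obtain $\E[\psi_A(Y,X,\theta)\,|\,X] = A(X,\theta)\,\E[\psi(Y,X,\theta)\,|\,X] = 0$ almost surely. The tower property then yields $\E[\psi_A(Y,X,\theta)] = 0$, and strict unconditional identification of $\psi_A$ forces $\theta = \theta_0$. This direction uses neither Assumption \ref{ass:reweighting} nor the rank hypothesis.

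The harder part is to show that $g(X) := \E[\psi(Y,X,\theta_0)\,|\,X]$ vanishes almost surely. For every $A \in \sigma(X)$ with $\P(A) > 0$, I would invoke Assumption \ref{ass:reweighting} to produce $\wt Z = (\wt Y, \wt X) \in \Z$ with $F_{\wt Z}(\cdot) = \P(Z \in \cdot \mid A)$; as noted in the discussion following Assumption \ref{ass:reweighting}, the true parameter is preserved, $\theta_0(F_{\wt Z}) = \theta_0$. Applying the unconditional identification of $\psi_A$ to $\wt Z$ and then the tower property gives
\[
0 \;=\; \E\bigl[\psi_A(\wt Y,\wt X,\theta_0)\bigr] \;=\; \frac{\E\bigl[A(X,\theta_0)\,\psi(Y,X,\theta_0)\,\one_A\bigr]}{\P(A)} \;=\; \frac{\E\bigl[A(X,\theta_0)\, g(X)\,\one_A\bigr]}{\P(A)}.
\]
Since $A \in \sigma(X)$ was arbitrary, the $\sigma(X)$-measurable $\R^q$-valued random vector $A(X,\theta_0) g(X)$ integrates to zero against every $\sigma(X)$-indicator and therefore vanishes almost surely. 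The hypothesis $\rank(A(X,\theta_0)) = k$ a.s.\ together with $q \ge k$ means $A(X,\theta_0)$ has full column rank almost surely, so it is left-invertible and $A(X,\theta_0) g(X) = 0$ yields $g(X) = 0$ a.s., as required.

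The main obstacle is the second step: upgrading an unconditional zero mean to a conditional one. The mechanism mirrors the proof of Theorem \ref{thm:hierarchy consistency}(iv): Assumption \ref{ass:reweighting} is precisely the device that localizes an unconditional identity to every $\sigma(X)$-event without altering the true parameter, and the almost-sure full column rank of $A(X,\theta_0)$ is exactly what is needed to strip off the instrument matrix. This is the unique step where the dimension hypothesis $q \ge k$ is consumed; once it is in place, the strictness transfer is automatic via the tower property.
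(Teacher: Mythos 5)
Your proposal is correct and follows essentially the same route as the paper's proof: both arguments hinge on Assumption \ref{ass:reweighting} to localize the unconditional moment condition to $\sigma(X)$-events (with $\theta_0(F_{\wt Z})=\theta_0(F_Z)$ preserved), the tower property to transfer the strictness, and the almost-sure full column rank of $A(X,\theta_0)$ to strip off the instrument matrix. The only difference is cosmetic: the paper argues by contraposition and, for the zero-at-truth part, isolates a single component of $\E[\psi_A(Y,X,\theta_0)|X]$ with a definite sign on an event of positive probability, whereas you quantify over all $\sigma(X)$-events and invoke the standard fact that an integrable random vector with vanishing integral over every measurable set is almost surely zero.
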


The following Example \ref{exmp:counter-example linear model} illustrates possible choices of instrument matrices in a linear mean regression context.
In particular, it shows that---in certain situations---we could relax the assumption that $A(X,\theta_0)$ needs to have full rank almost surely. However, relaxing the assumptions of Proposition \ref{prop:hierarchy idenfitication} does not seem to be a fruitful direction from our point of view, because one would need to tailor the relaxed assumptions almost on a case by case basis.

\begin{example}
	\label{exmp:counter-example linear model}
	Let $d=k=1$, $q=p\ge1$ and $\X$ be such that $\E[X X^\intercal]$ has full rank for all $X\in\X$. Then define
	\[
	\Z = \{(m(X,\theta_0) + \eps, X)\,|\, X\in\X, \ \theta_0 \in\Theta = \R^q, \ \E[\eps|X]=0 \},
	\]
	where $m(X,\theta) = X^\intercal\theta$. 
	Clearly,
	\(
	\Gamma(F_{Y|X}):= \int y \dint F_{Y|X}(y) = X^\intercal\theta_0\,.
	\)
	The condition that $\E[X X^\intercal]$ has full rank implies that the model $m$ uniquely identifies the parameter $\theta_0$ such that Assumption \ref{ass:unique model} holds. 
	Indeed, for any $\theta'\neq \theta$ it holds that 
	\(
	0< (\theta - \theta')^\intercal \E\big[XX^\intercal \big](\theta - \theta') = \E \big[\|m(X,\theta) - m(X,\theta')\|^2\big]\,.
	\)
	Employing the canonical identification function for the mean, we obtain a strict conditional $\F_{\Z}$-identification function $\ph\big(Y,m(X,\theta)\big) = m(X,\theta) - Y = X^\intercal\theta - Y$. Indeed $\E\big[\ph\big(Y,m(X,\theta)\big) |X\big] = X^\intercal(\theta - \theta_0)$. If $\theta\neq \theta_0$ and $X^\intercal(\theta - \theta_0)$ were zero $\as$, then $\E[X X^\intercal](\theta-\theta_0)=0$, violating the full rank property of $\E[X X^\intercal]$. However, $\ph\big(Y,m(X,\theta)\big)$ is in general not a \emph{strict unconditional} $\F_\Z$-identification function for $\theta_0$. It could be the case, for example, that there is an $X\in\X$ with $\E[X]=0$ such that we obtain $\E\big[\ph\big(Y,m(X,\theta)\big)\big] = \E[X^\intercal(\theta - \theta_0)]=0$ for all $\theta$.
	
	Now, choosing an instrument matrix $A(X,\theta)$ such that $\E[A(X,\theta) X^\intercal]$ has full rank for all $\theta$ is a sufficient and necessary condition to ensure that $\psi_A(Y,X,\theta) = A(X,\theta) \ph\big(Y,m(X,\theta)\big)$ is a strict \emph{unconditional} $\F_\Z$-identification function for $\theta_0$. Indeed, we obtain
	\(
	\E[\psi_A(Y,X,\theta)] = \E\big[A(X,\theta) X^\intercal\big](\theta - \theta_0).
	\)
	In particular, the choice $A(X, \theta) = X$ yields a strict unconditional $\F_\Z$-identification function for $\theta_0$.
	\qed
\end{example}

\section{Proofs}
\label{sec:proofs}

\begin{proof}[Proof of Proposition \ref{prop:hierarchy ident}]
	Part (i) is a direct application of the definitions, using similar arguments to the ones in the proof of Theorem \ref{thm:hierarchy consistency} (i).
	For part (ii) we have under Assumption \ref{ass:unique model} that 
	$0= \E\big[\ph\big(Y,m(X,\theta_0)\big)\big|X\big] = \E\big[\ph\big(Y,\Gamma(F_{Y|X})\big)\big|X\big]$. 
\end{proof}

\begin{proof}[Proof of Proposition \ref{prop:primitive cond}]
	The tower property implies $\E\big[A(X,\theta_0) \ph\big(Y,m(X,\theta_0)\big)\big]=0$. For $\theta \neq \theta_0$ the mean value theorem yields
	\begin{align*}
		&\E\big[\ph\big(Y, m(X,\theta)\big) |X\,\big] 
		= \E\big[\ph\big(Y, m(X,\theta)\big) |X\,\big] - \E\big[\ph\big(Y, m(X,\theta_0)\big) |X\,\big] \\
		&= \nabla_{\theta} \E\big[\ph\big(Y, m(X,\theta)\big) |X\,\big] \,\big|_{\theta = \theta'} (\theta - \theta_0)
		= D(X,\theta') (\theta - \theta_0)
	\end{align*}
	Therefore 
	$\E\big[A(X, \theta) \ph\big(Y, m(X,\theta)\big) \big] 
	= \E\big[A(X, \theta) D(X,\theta') \big] (\theta - \theta_0)\neq 0$.
\end{proof}

\begin{proof}[Proof of Proposition \ref{prop:hierarchy idenfitication}]
	Assume that $\psi$ is not a strict conditional $\F_\Z$-model-identification function for $\theta_0$. That means there is $Z= (Y,X)\in\Z$ with true model parameter $\theta_0 = \theta_0(F_Z)$ such that 
	\be{eq:violation_new}
	\P\big(\E[\psi(Y,X,\theta_0)|X]\neq0\big)>0
	\ee
	or
	\be{eq:violation2_new}
	\exists \theta'\neq\theta_0\colon \E[\psi(Y,X,\theta')|X]=0\quad \as
	\ee
	If \eqref{eq:violation2_new} holds, then we can directly apply the tower property to obtain that for some $\theta' \neq\theta_0$ we have
	\(
	\E[\psi_A(Y,X,\theta')] = \E \big[ A(X,\theta') \E[\psi(Y,X,\theta')|X] \big] =0,
	\)
	which means that $\psi_A$ is not a strict unconditional $\F_\Z$-identification function for $\theta_0$.
	Now, we assume that \eqref{eq:violation_new} holds. Then using \eqref{eq:full rank} we can conclude that 
	\(
	\P\big(\E[\psi_A(Y,X,\theta_0)|X]\neq0\big) 
	= \P\big(A(X,\theta_0) \E[\psi(Y,X,\theta_0)|X]\neq0\big) 
	\ge \P\big(\{\rank(A(X,\theta_0))=k\}\cap \{\E[\psi(Y,X,\theta_0)|X]\neq0\}\big) >0.
	\)
	Then, we can again argue that there exists a component $j\in\{1,\ldots, q\}$ such that 
	$\P\big(\E[\psi_{A,j}(Y,X,\theta_0)|X]<0\big)>0$ or 
	$\P\big(\E[\psi_{A,j}(Y,X,\theta_0)|X]>0\big)>0$,
	and we continue as in the proof of Theorem \ref{thm:hierarchy consistency} (iv).
\end{proof}

\end{document}